\def\Cchi{{\raisebox{.2ex}{ \large $\chi$}}}
\newtheorem{prethm}{{\bf Theorem}}
\newenvironment{thm}{\begin{prethm}{\hspace{-0.5
em}{\bf~}}}{\end{prethm}}
\newtheorem{precor}{{\bf Corollary}}
\newtheorem{preprop}{{\bf Proposition}}
\newtheorem{preque}{{\bf Question}}
\newtheorem{preques}{{\bf Question}}
\newtheorem{preremark}{{\bf Remark}}
\newtheorem{prelemma}{{\bf Lemma}}
\newenvironment{lemma}{\begin{prelemma}{\hspace{-0.5
em}{\bf~}}}{\end{prelemma}}
\newtheorem{prelemm}{{\bf Lemma}}
\newtheorem{preex}{{\bf Example}}
\newenvironment{ex}{\begin{preex}{\hspace{-0.5em}{\bf~}}}{\end{preex}}
\newtheorem{prepro}{{\bf Proposition}}
\newtheorem{preobs}{{\bf Observation}}
\newtheorem{preprob}{{\bf Problem}}
\newtheorem{prelem}{{\bf Theorem}}
\newtheorem{preproof}{{\bf Proof.}}
\newtheorem{preconj}{{\bf Conjecture}}
\newtheorem{predefi}{{\bf Definition}}
\newenvironment{defi}{\begin{predefi}{\hspace{-0.90
em}{\bf~}}}{\end{predefi}}
\newtheorem{predeff}{{\bf Definition}}
\date{}
\title{{\large\bf \vspace*{-5mm} A Bound for the Locating Chromatic Numbers of Trees}}
\author{
{\sc Ali Behtoei\footnote{{\small \it This research was in part supported by a grant from IPM (No. 91050012).}}}~~~~and~~~~{\sc Mahdi Anbarloei}\\
{\small \it a.behtoei@sci.ikiu.ac.ir}\\
{\small \it  Department of Mathematics}\\
{\small \it  Imam Khomeini International University} \\
{\small \it  P.O. Box: 34149-16818, Qazvin, Iran}\\
~and \\
{\small \it School of Mathematics} \\
{\small \it Institute for Research in Fundamental Sciences (IPM)} \\
{\small \it P.O. Box: 19395-5746, Tehran, Iran}
}
\begin{document}

\small
\voffset=-20mm
\maketitle

\begin{abstract}
{\footnotesize Let $f$ be a proper $k$-coloring of a connected graph $G$  and
$\Pi=(V_1,V_2,\ldots,V_k)$ be an ordered partition of $V(G)$ into
the resulting color classes. For a vertex $v$ of $G$, the color
code of $v$ with respect to $\Pi$ is defined to be the ordered
$k$-tuple $c_{{}_\Pi}(v)=(d(v,V_1),d(v,V_2),\ldots,d(v,V_k)),$
where $d(v,V_i)=\min\{d(v,x):~x\in V_i\}, 1\leq i\leq k$. If
distinct vertices have distinct color codes, then $f$ is called a
locating coloring.  The minimum number of colors needed in a
locating coloring of $G$ is the locating chromatic number of $G$,
denoted by  $\Cchi_{{}_L}(G)$. In this paper, we study the locating chromatic numbers of trees.
We provide a counter example to a theorem of Gary Chartrand et al.
[G. Chartrand, D. Erwin, M.A. Henning, P.J. Slater, P. Zhang, The locating-chromatic number of a graph,
Bull. Inst. Combin. Appl.  36 (2002) 89-101]
about the locating chromatic numbers of trees. Also, we offer a new bound for the locating chromatic number of trees. Then, by constructing a special family of trees, we show that this bound is best possible.}
\end{abstract}

\noindent{\bf Keywords: }
Locating coloring, Locating chromatic number, Tree, maximum degree.
\\Mathematics Subject Classification[2010]: ~05C15

\section{Introduction}

 Let $G$ be a graph
without loops and multiple edges with vertex set $V(G)$ and edge
set $E(G)$. A proper $k$-coloring of $G$, $k\in \Bbb{N}$,  is a
function $f$ defined from $V(G)$ onto a set of colors
$[k]=\{1,2,\ldots,k\}$ such that every two adjacent vertices have
different colors. In fact, for every $i$, $1\leq i\leq k$, the set
$f^{-1}(i)$ is a nonempty independent set of vertices which is
called the color class $i$. When $S\subseteq V(G)$, then $f(S)=\{f(u):~u\in S\}$. The minimum cardinality $k$ for which
$G$ has a proper $k$-coloring is the chromatic number of $G$,
denoted by $\Cchi(G)$. For a connected graph $G$, the distance
$d(u,v)$ between two vertices $u$ and $v$ in $G$ is the length of
a shortest path between them, and for a subset $S$ of $V(G)$, the
distance between $u$ and $S$ is given by
$d(u,S)=\min\{d(u,x):~x\in S\}$. The diameter of $G$ is $\max\{d(u,v):~u,v\in V(G)\}$. When $u$ is a vertex of $G$, then the neighbor of $u$ in $G$ is the set $N_G(u)=\{v:~v\in V(G),~d(u,v)=1\}$.
The degree of $u$ and the maximum degree of vertices of $G$ are given by $\deg(u)=|N_G(u)|$ and $\Delta(G)=\max\{\deg(v):~v\in V(G)\}$, respectively.

\begin{defi} {\rm \cite{XL}}
Let $f$ be a proper $k$-coloring of a connected graph $G$ and
$\Pi=(V_1,V_2,\ldots,V_k)$ be an ordered partition of $V(G)$ into
the resulting color classes. For a vertex $v$ of $G$, the {\bf color
code} of $v$ with respect to $\Pi$ is defined to be the ordered
$k$-tuple $$c_{{}_\Pi}(v)=(d(v,V_1),d(v,V_2),\ldots,d(v,V_k)).$$
  If distinct vertices of $G$ have distinct color codes, then $f$ is
called a {\bf locating coloring} of $G$. The {\bf locating chromatic number},
denoted by $\Cchi_{{}_L}(G)$,  is the minimum number of colors in a locating
coloring of $G$.
\end{defi}

The concept of locating coloring was first introduced and studied
by Chartrand et al. in \cite{XL}. They established some bounds for
the locating chromatic number of a connected graph. They also
proved that for a connected graph $G$ with $n\geq 3$ vertices, we
have $\Cchi_{_L}(G)=n$ if and only if $G$ is a complete
multipartite graph. Hence, the locating chromatic number of the
complete graph $K_n$ is $n$. Also for paths and cycles of order
$n\geq 3$ it is proved in \cite{XL} that $\Cchi_{_L}(P_n)=3$,
$\Cchi_{_L}(C_n)=3$ when $n$ is odd, and $\Cchi_{_L}(C_n)=4$ when
$n$ is even. \\The locating chromatic numbers of trees, Kneser
graphs, Cartesian product of graphs, and the amalgamation of stars are studied in \cite{XL},
\cite{Behtoei}, \cite{Behtoei2}, and \cite{Asmiati}, respectively. For more
results in the subject and related subjects, see~\cite{Asmiati}
to \cite{Conditional}.

Obviously, $\Cchi(G)\leq \Cchi_{{}_L}(G)$. Note that the $i$-th
coordinate of the color code of each vertex in the color class
$V_i$ is zero and its other coordinates are non zero. Hence, a
proper coloring is a locating coloring whenever the color codes of vertices
in each color class are different.

In this paper, we investigate the relation between the locating chromatic number of a tree and its maximum degree.
We provide a counter example to a theorem established in \cite{XL} about the locating chromatic numbers of trees. Then, we offer a new bound which is tight and best possible.

\section{Maximum degree and locating chromatic number}

For investigating the relation between the locating chromatic number of a tree and its maximum degree,
the following theorem is proved by Chartrand et al. \cite{XL}.

\begin{thm} {\rm \cite{XL}}\label{incorrectbound}
Let $k\geq 3$. If $~T$ is a tree for which $\Delta(T)>(k-1)2^{k-2}$, then $\Cchi_{_L}(T)>k$.
\end{thm}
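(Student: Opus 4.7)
The plan is to argue the contrapositive: if $T$ admits a locating $k$-coloring $f$ with color classes $\Pi=(V_1,\ldots,V_k)$, then $\Delta(T)\leq (k-1)2^{k-2}$. Let $v$ be a vertex of maximum degree and set $c_0:=f(v)$. For each neighbor $u\in N(v)$, the color code $c_{{}_\Pi}(u)$ automatically has a $0$ in position $f(u)$, and a $1$ in position $c_0$ since $u$ is adjacent to $v\in V_{c_0}$. Moreover, whenever some neighbor of $v$ carries a color $j\neq c_0$, every other $u\in N(v)$ satisfies $d(u,V_j)\leq 2$, because a path of length $2$ through $v$ reaches any such neighbor.

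The natural strategy is to introduce the \emph{truncated code} $\widetilde c(u)\in\{0,1,2\}^k$ defined by $\widetilde c(u)_j:=\min\{d(u,V_j),2\}$. For each $u\in N(v)$, the tuple $\widetilde c(u)$ carries a $0$ at coordinate $f(u)$, a $1$ at coordinate $c_0$, and an entry in $\{1,2\}$ at each of the remaining $k-2$ coordinates, giving at most $(k-1)\cdot 2^{k-2}$ possible truncated codes. If I can show that $u\mapsto\widetilde c(u)$ is injective on $N(v)$, then $\Delta(T)=|N(v)|\leq (k-1)2^{k-2}$, completing the proof.

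The injectivity step is trivial when $u,u'\in N(v)$ satisfy $f(u)\neq f(u')$, since the truncated codes then differ in the $f(u)$-th coordinate. The delicate case is $f(u)=f(u')=c$: the locating hypothesis yields some $j\notin\{c_0,c\}$ with $d(u,V_j)\neq d(u',V_j)$, and I would need to select such a $j$ for which at least one of the two distances is at most $2$, so that the truncations also differ. The supporting observation is that any $x\in V_j$ lying outside the components $T_u$ and $T_{u'}$ of $T-v$ satisfies $d(u,x)=d(u',x)=1+d(v,x)$, so such witnesses contribute identically to both codes and cannot discriminate $u$ from $u'$; only witnesses inside $T_u\cup T_{u'}$ can.

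The main obstacle---which I expect to be decisive---is that a discriminating witness $x\in V_j$ need not sit within distance $2$ of either $u$ or $u'$. If color $j$ happens to be absent from $N(v)$, the closest vertex of color $j$ inside the subtree $T_u$ (respectively $T_{u'}$) can occur at arbitrary depth, so $d(u,V_j)$ and $d(u',V_j)$ can take two distinct values both exceeding $2$. Their truncations then collapse to $2$, and the pigeonhole argument fails to separate $u$ from $u'$. Closing this gap without an extra hypothesis on how colors distribute in $N(v)$ is precisely where the proof appears to break down, and I suspect this is exactly the loophole exploited by the counter-example announced in the abstract.
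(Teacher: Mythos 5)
Your instinct at the end is exactly right, and you should trust it: the statement you were asked to prove is \emph{false} for every $k\geq 5$, and the entire point of this paper is to exhibit a counterexample. The tree $T_5$ of Example \ref{counterexample} has $\Delta(T_5)=36>(5-1)2^{5-2}=32$ yet admits a locating $5$-coloring, so no proof of Theorem \ref{incorrectbound} can exist. The paper does not prove this statement; it refutes it and replaces it with Theorem \ref{correctbound}.

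Moreover, the gap you isolated is precisely the flaw. Your truncation $\widetilde c(u)_j=\min\{d(u,V_j),2\}$ is injective on $N(v)$ only if every pair of same-colored neighbors is separated by a coordinate whose value is at most $2$, and, as you observe, a color $j$ absent from $N(v)$ can have its nearest representatives at arbitrary depth inside the branches $T_u$ and $T_{u'}$, producing distinct full codes with identical truncations. The correct accounting (the paper's Theorem \ref{correctbound}) is: if $p=|f(N(v))|$ and $d_j=d(v,V_j)$, then for each neighbor $u$ the coordinate at a color $r$ present on $N(v)$ lies in $\{1,2\}$ ($p-1$ free coordinates), while the coordinate at a color $j$ absent from $N(v)$ lies in $\{d_j-1,d_j,d_j+1\}$ --- three possible values, not two ($k-1-p$ such coordinates). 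This yields $|N(v)|\leq p\,2^{p-1}3^{k-1-p}$, whose maximum over $p$ is $4\times 3^{k-3}$ (Lemma \ref{p23}), strictly larger than $(k-1)2^{k-2}$ for $k\geq 5$. So your diagnosis identifies not just where a proof attempt stalls but the exact combinatorial overcount that makes the claimed bound wrong; the only thing missing from your write-up is the explicit three-valued count and the observation that it is achievable, which is what the construction of $T_k$ in Theorem \ref{construction} realizes.
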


They claimed that the bound in Theorem \ref{incorrectbound} is tight and cannot be improved. For this reason, in \cite{XL} they constructed a tree (for the special case $k=4$) with maximum degree $(4-1)2^{4-2}=12$ whose locating chromatic number is $4$.
\\
Actually, Theorem \ref{incorrectbound} is incorrect and its bound is not tight when $k$ is an integer greater than 4.
Fore instance, Theorem \ref{incorrectbound} implies that the locating chromatic number of each tree with maximum degree greater than $(5-1)2^{5-2}=32$ is at least $6$. But Example \ref{counterexample} provides a tree with maximum degree $36$ whose locating chromatic number is (at most) 5.

\begin{ex} \label{counterexample}
Consider the labeled tree $T_5$ which is illustrated in Figure \ref{fig:namedtree}. A proper vertex $5$-coloring of $T_5$, call it $f_5$, is illustrated in Figure \ref{fig:coloredtree}. For each $i$, $1\leq i\leq 5$, let $V_i$ be the set of vertices of $T_5$ with color $i$, and let $\Pi=(V_1,V_2,...,V_5)$. Color classes and corresponding color codes of the vertices of $T_5$ are illustrated in Table \ref{t1}. Since distinct vertices have distinct color codes, $f_5$ is a locating $5$-coloring of $T_5$ and hence, $\Cchi_{_L}(T_5)\leq 5$.
\end{ex}
\noindent Fore more details about the tree $T_5$ and its coloring $f_5$, see the arguments before Theorem \ref{construction}. Moreover, since
$$\Delta(T_5)=\deg(x)=36>4\times3^{4-3},$$ Theorem \ref{correctbound} implies that $\Cchi_{_L}(T_5)>4$. Therefore, $\Cchi_{_L}(T_5)=5$.
\begin{figure}\centering
\hspace*{-4mm}
\includegraphics[scale=0.99]{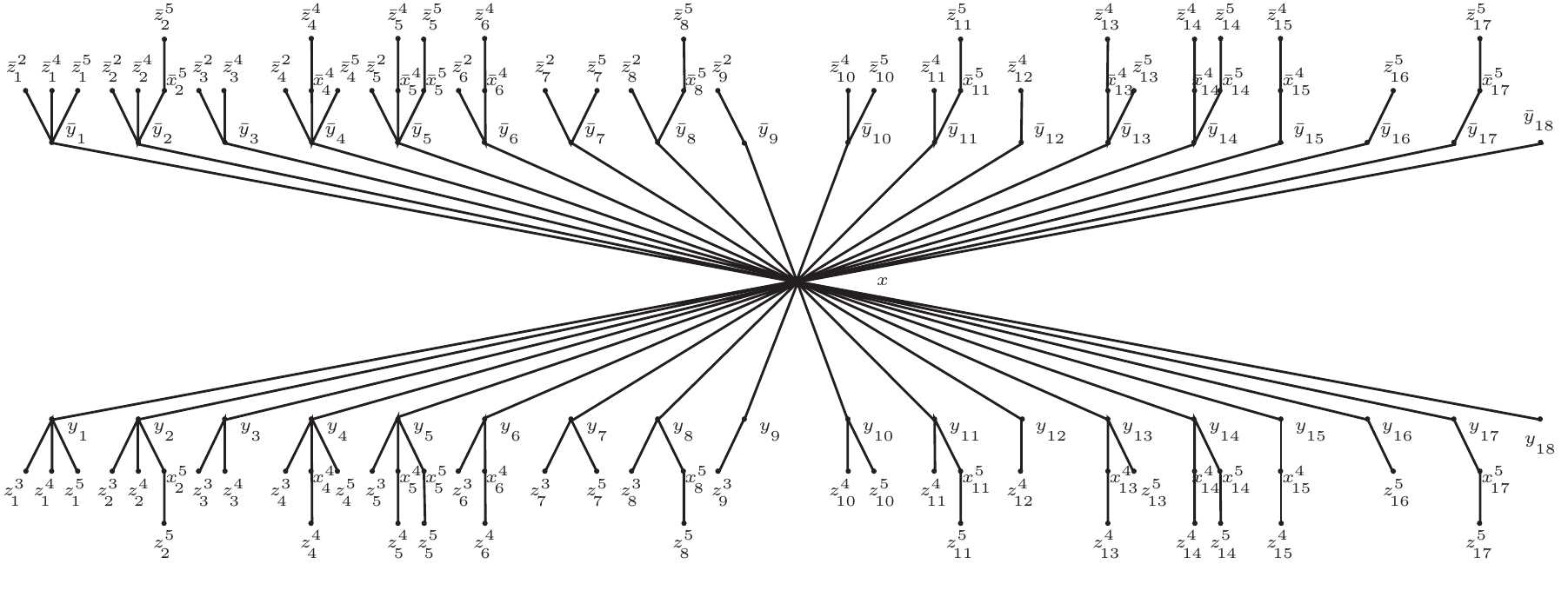}
\caption{\label{fig:namedtree} The tree $T_5$ with maximum degree 36.}
\end{figure}

\begin{figure}\centering
\hspace*{-5mm}
\includegraphics[scale=0.98]{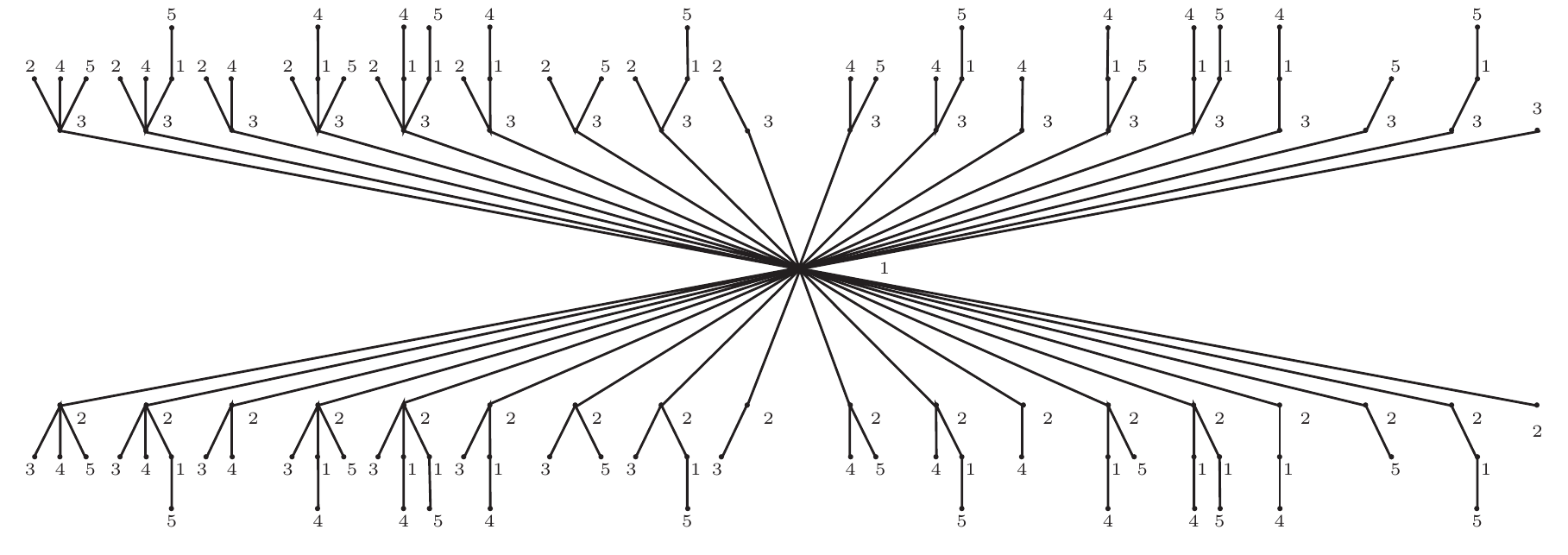}
\caption{\label{fig:coloredtree} The locating $5$-coloring $f_5$ of the tree $T_5$.}
\end{figure}
\begin{center}{
\begin{table}[h]\label{t1}
{\caption {\label{t1} The color classes and corresponding color codes of the locating $5$-coloring $f_5$ of $~T_5$.}}
\begin{tabular}{|c|c|c|c|c|}
\hline $V_1$ & $V_2$ & $V_3$ & $V_4$ & $V_5$\\ \hline
$x~(0,1,1,2,2)$&$y_{_{_{1}}}~(1,0,1,1,1)$&$\bar{y}_{_{_{1}}}~(1,1,0,1,1)$&$z_{_{\!1}}^4~(2,1,2,0,2)$&$z_{_{\!1}}^5~(2,1,2,2,0)$\\
$x_{_{\!2}}^5~(0,1,2,2,1)$&$y_{_{_{2}}}~(1,0,1,1,2)$&$\bar{y}_{_{_{2}}}~(1,1,0,1,2)$&$z_{_{\!2}}^4~(2,1,2,0,3)$&$z_{_{\!2}}^5~(1,2,3,3,0)$\\
$x_{_{\!4}}^4~(0,1,2,1,2)$&$y_{_{_{3}}}~(1,0,1,1,3)$&$\bar{y}_{_{_{3}}}~(1,1,0,1,3)$&$z_{_{\!3}}^4~(2,1,2,0,4)$&$z_{_{\!4}}^5~(2,1,2,3,0)$\\
$x_{_{\!5}}^4~(0,1,2,1,3)$&$y_{_{_{4}}}~(1,0,1,2,1)$&$\bar{y}_{_{_{4}}}~(1,1,0,2,1)$&$z_{_{\!4}}^4~(1,2,3,0,3)$&$z_{_{\!5}}^5~(1,2,3,4,0)$\\
$x_{_{\!5}}^5~(0,1,2,3,1)$&$y_{_{_{5}}}~(1,0,1,2,2)$&$\bar{y}_{_{_{5}}}~(1,1,0,2,2)$&$z_{_{\!5}}^4~(1,2,3,0,4)$&$z_{_{\!7}}^5~(2,1,2,4,0)$\\
$x_{_{\!6}}^4~(0,1,2,1,4)$&$y_{_{_{6}}}~(1,0,1,2,3)$&$\bar{y}_{_{_{6}}}~(1,1,0,2,3)$&$z_{_{\!6}}^4~(1,2,3,0,5)$&$z_{_{\!8}}^5~(1,2,3,5,0)$\\
$x_{_{\!8}}^5~(0,1,2,4,1)$&$y_{_{_{7}}}~(1,0,1,3,1)$&$\bar{y}_{_{_{7}}}~(1,1,0,3,1)$&$z_{_{\!10}}^4~(2,1,3,0,2)$&$z_{_{\!10}}^5~(2,1,3,2,0)$\\
$x_{_{\!11}}^5~(0,1,3,2,1)$&$y_{_{_{8}}}~(1,0,1,3,2)$&$\bar{y}_{_{_{8}}}~(1,1,0,3,2)$&$z_{_{\!11}}^4~(2,1,3,0,3)$&$z_{_{\!11}}^5~(1,2,4,3,0)$\\
$x_{_{\!13}}^4~(0,1,3,1,2)$&$y_{_{_{9}}}~(1,0,1,3,3)$&$\bar{y}_{_{_{9}}}~(1,1,0,3,3)$&$z_{_{\!12}}^4~(2,1,3,0,4)$&$z_{_{\!13}}^5~(2,1,3,3,0)$\\
$x_{_{\!14}}^4~(0,1,3,1,3)$&$y_{_{_{10}}}~(1,0,2,1,1)$&$\bar{y}_{_{_{10}}}~(1,2,0,1,1)$&$z_{_{\!13}}^4~(1,2,4,0,3)$&$z_{_{\!14}}^5~(1,2,4,4,0)$\\
$x_{_{\!14}}^5~(0,1,3,3,1)$&$y_{_{_{11}}}~(1,0,2,1,2)$&$\bar{y}_{_{_{11}}}~(1,2,0,1,2)$&$z_{_{\!14}}^4~(1,2,4,0,4)$&$z_{_{\!16}}^5~(2,1,3,4,0)$\\
$x_{_{\!15}}^4~(0,1,3,1,4)$&$y_{_{_{12}}}~(1,0,2,1,3)$&$\bar{y}_{_{_{12}}}~(1,2,0,1,3)$&$z_{_{\!15}}^4~(1,2,4,0,5)$&$z_{_{\!17}}^5~(1,2,4,5,0)$\\
$x_{_{\!17}}^5~(0,1,3,4,1)$&$y_{_{_{13}}}~(1,0,2,2,1)$&$\bar{y}_{_{_{13}}}~(1,2,0,2,1)$&$\bar{z}_{_{\!1}}^4~(2,2,1,0,2)$&$\bar{z}_{_{\!1}}^5~(2,2,1,2,0)$\\
$\bar{x}_{_{\!2}}^5~(0,2,1,2,1)$&$y_{_{_{14}}}~(1,0,2,2,2)$&$\bar{y}_{_{_{14}}}~(1,2,0,2,2)$&$\bar{z}_{_{\!2}}^4~(2,2,1,0,3)$&$\bar{z}_{_{\!2}}^5~(1,3,2,3,0)$\\
$\bar{x}_{_{\!4}}^4~(0,2,1,1,2)$&$y_{_{_{15}}}~(1,0,2,2,3)$&$\bar{y}_{_{_{15}}}~(1,2,0,2,3)$&$\bar{z}_{_{\!3}}^4~(2,2,1,0,4)$&$\bar{z}_{_{\!4}}^5~(2,2,1,3,0)$\\
$\bar{x}_{_{\!5}}^4~(0,2,1,1,3)$&$y_{_{_{16}}}~(1,0,2,3,1)$&$\bar{y}_{_{_{16}}}~(1,2,0,3,1)$&$\bar{z}_{_{\!4}}^4~(1,3,2,0,3)$&$\bar{z}_{_{\!5}}^5~(1,3,2,4,0)$\\
$\bar{x}_{_{\!5}}^5~(0,2,1,3,1)$&$y_{_{_{17}}}~(1,0,2,3,2)$&$\bar{y}_{_{_{17}}}~(1,2,0,3,2)$&$\bar{z}_{_{\!5}}^4~(1,3,2,0,4)$&$\bar{z}_{_{\!7}}^5~(2,2,1,4,0)$\\
$\bar{x}_{_{\!6}}^4~(0,2,1,1,4)$&$y_{_{_{18}}}~(1,0,2,3,3)$&$\bar{y}_{_{_{18}}}~(1,2,0,3,3)$&$\bar{z}_{_{\!6}}^4~(1,3,2,0,5)$&$\bar{z}_{_{\!8}}^5~(1,3,2,5,0)$\\
$\bar{x}_{_{\!8}}^5~(0,2,1,4,1)$&$\bar{z}_{_{\!1}}^2~(2,0,1,2,2)$&$z_{_{\!1}}^3~(2,1,0,2,2)$&$\bar{z}_{_{\!10}}^4~(2,3,1,0,2)$&$\bar{z}_{_{\!10}}^5~(2,3,1,2,0)$\\
$\bar{x}_{_{\!11}}^5~(0,3,1,2,1)$&$\bar{z}_{_{\!2}}^2~(2,0,1,2,3)$&$z_{_{\!2}}^3~(2,1,0,2,3)$&$\bar{z}_{_{\!11}}^4~(2,3,1,0,3)$&$\bar{z}_{_{\!11}}^5~(1,4,2,3,0)$\\
$\bar{x}_{_{\!13}}^4~(0,3,1,1,2)$&$\bar{z}_{_{\!3}}^2~(2,0,1,2,4)$&$z_{_{\!3}}^3~(2,1,0,2,4)$&$\bar{z}_{_{\!12}}^4~(2,3,1,0,4)$&$\bar{z}_{_{\!13}}^5~(2,3,1,3,0)$\\
$\bar{x}_{_{\!14}}^4~(0,3,1,1,3)$&$\bar{z}_{_{\!4}}^2~(2,0,1,3,2)$&$z_{_{\!4}}^3~(2,1,0,3,2)$&$\bar{z}_{_{\!13}}^4~(1,4,2,0,3)$&$\bar{z}_{_{\!14}}^5~(1,4,2,4,0)$\\
$\bar{x}_{_{\!14}}^5~(0,3,1,3,1)$&$\bar{z}_{_{\!5}}^2~(2,0,1,3,3)$&$z_{_{\!5}}^3~(2,1,0,3,3)$&$\bar{z}_{_{\!14}}^4~(1,4,2,0,4)$&$\bar{z}_{_{\!16}}^5~(2,3,1,4,0)$\\
$\bar{x}_{_{\!15}}^4~(0,3,1,1,4)$&$\bar{z}_{_{\!6}}^2~(2,0,1,3,4)$&$z_{_{\!6}}^3~(2,1,0,3,4)$&$\bar{z}_{_{\!15}}^4~(1,4,2,0,5)$&$\bar{z}_{_{\!17}}^5~(1,4,2,5,0)$\\
$\bar{x}_{_{\!17}}^5~(0,3,1,4,1)$&$\bar{z}_{_{\!7}}^2~(2,0,1,4,2)$&$z_{_{\!7}}^3~(2,1,0,4,2)$&$                 $&$ $\\
$                               $&$\bar{z}_{_{\!8}}^2~(2,0,1,4,3)$&$z_{_{\!8}}^3~(2,1,0,4,3)$&$                 $&$ $\\
$                               $&$\bar{z}_{_{\!9}}^2~(2,0,1,4,4)$&$z_{_{\!9}}^3~(2,1,0,4,4)$&$                 $&$ $\\
\hline
\end{tabular}
\end{table}
}\end{center}
\begin{lemma} \label{p23}
Let $k\geq 3$ be an integer. Then, $$\max\{p2^{p-1}3^q:~1+p+q=k,~p\in\Bbb{N},~q\in\Bbb{N}\cup\{0\}\}=4\times 3^{k-3}.$$
Moreover,  the maximum value happens just for $p\in\{2,3\}$.
\end{lemma}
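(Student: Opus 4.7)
The plan is to treat this as a one-variable optimization problem in $p$, using the constraint $q = k - 1 - p$ to eliminate $q$, and then analyze the resulting expression by a ratio-of-consecutive-terms argument.

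First I will define
\[
g(p) := p \cdot 2^{p-1} \cdot 3^{\,k-1-p}, \qquad p \in \{1,2,\ldots,k-1\},
\]
so that the left-hand side of the claimed identity is $\max_{p} g(p)$. A direct check gives
\[
g(2) = 2 \cdot 2 \cdot 3^{k-3} = 4 \cdot 3^{k-3}, \qquad g(3) = 3 \cdot 4 \cdot 3^{k-4} = 4 \cdot 3^{k-3},
\]
which establishes the target value and shows it is attained at both $p = 2$ and $p = 3$ (the latter requiring $k \geq 4$; the case $k = 3$, where only $p \in \{1,2\}$ is available, is handled by the same direct computation since $g(1) = 3 < 4 = g(2)$).

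Next, to prove that no other $p$ attains the maximum, I will compute the ratio of consecutive values:
\[
\frac{g(p+1)}{g(p)} \;=\; \frac{(p+1)\cdot 2^{p} \cdot 3^{k-2-p}}{p \cdot 2^{p-1} \cdot 3^{k-1-p}} \;=\; \frac{2(p+1)}{3p}.
\]
Comparing $2(p+1)$ with $3p$ immediately yields the trichotomy: the ratio is strictly greater than $1$ iff $p < 2$, equal to $1$ iff $p = 2$, and strictly less than $1$ iff $p \geq 3$. Consequently $g$ is strictly increasing from $p = 1$ to $p = 2$, equal at $p = 2$ and $p = 3$, and strictly decreasing from $p = 3$ onward.

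This monotonicity pattern immediately gives both halves of the lemma: the maximum value of $g$ on $\{1,\ldots,k-1\}$ equals $g(2) = g(3) = 4 \cdot 3^{k-3}$, and it is attained only at $p \in \{2,3\}$. There is no real obstacle here; the only point worth care is the boundary case $k = 3$, where $p = 3$ is out of range and the maximum is attained solely at $p = 2$ — which is still consistent with the statement since the admissible elements of $\{2,3\}$ are exactly those lying in $\{1,\ldots,k-1\}$.
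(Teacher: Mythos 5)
Your proof is correct and follows essentially the same route as the paper: eliminate $q$ via $q=k-1-p$ and compare consecutive values of the resulting one-variable function, the only cosmetic difference being that you use the ratio $g(p+1)/g(p)$ where the paper uses a difference. If anything, your version is slightly more complete, since the ratio trichotomy explicitly disposes of $p=1$ and you flag the $k=3$ boundary case, both of which the paper's proof passes over.
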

\begin{proof}{
It can be easily checked that the statements of the theorem hold for $k\in\{3,4\}$. Hereafter suppose that $k\geq5$.
Since $1+p+q=k$, we have $q=k-1-p$. Therefore, If $p=2$, then $q=k-3$ and $p2^{p-1}3^q=4\times 3^{k-3}$.
Also, If $p=3$, then $q=k-4$ and $p2^{p-1}3^q=4\times 3^{k-3}$.
Now for each $p>3$ we have
\begin{eqnarray*}
(p-1)2^{p-2}3^{q+1}-p2^{p-1}3^q&=&3(p-1)-2p\\
&=&p-3>0.
\end{eqnarray*}
This means $(p-1)2^{p-2}3^{q+1}>p2^{p-1}3^q$, which completes the proof.
}\end{proof}

\begin{thm} \label{correctbound}
Let $k\geq 3$ be an integer. If $~T$ is a tree with $\Cchi_{_L}(T)=k$, then $\Delta(T)\leq 4\times 3^{k-3}$.
\end{thm}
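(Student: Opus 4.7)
The plan is to pick a vertex $v\in V(T)$ of maximum degree in a locating $k$-coloring of $T$ and to upper-bound $\deg(v)$ by the number of distinct color codes that a neighbor of $v$ could in principle display. This count will turn out to be exactly $p\cdot 2^{p-1}\cdot 3^q$ for a suitable decomposition $1+p+q=k$ of the $k$ colors, after which Lemma~\ref{p23} immediately gives the desired bound $4\cdot 3^{k-3}$.

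More precisely, I would fix a locating $k$-coloring $f$ of $T$ with color classes $\Pi=(V_1,\ldots,V_k)$ and a vertex $v$ with $\deg(v)=\Delta(T)$. Set $a=f(v)$ and let $P\subseteq[k]\setminus\{a\}$ be the set of colors appearing on $N_T(v)$; equivalently, $P$ is the set of colors $j$ with $d(v,V_j)=1$. Write $p=|P|\ge 1$ and put $q=k-1-p$, so that the $q$ colors in $[k]\setminus(\{a\}\cup P)$ are exactly the colors at distance $\ge 2$ from $v$. Then I would examine an arbitrary neighbor $u\in N_T(v)$ coordinate-by-coordinate: the $f(u)$-coordinate of $c_\Pi(u)$ equals $0$; the $a$-coordinate equals $1$, because $v\in V_a$ is adjacent to $u$ while $u\notin V_a$; and for every remaining color $j$ the triangle inequality $|d(u,V_j)-d(v,V_j)|\le d(u,v)=1$ forces $d(u,V_j)\in\{1,2\}$ when $j\in P\setminus\{f(u)\}$ (with the value $0$ ruled out because $u\notin V_j$), and $d(u,V_j)\in\{d(v,V_j)-1,d(v,V_j),d(v,V_j)+1\}$ when $j\notin P\cup\{a\}$. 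That gives at most $2$ values for each of the $p-1$ close-color coordinates and at most $3$ values for each of the $q$ far-color coordinates.

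Multiplying over the $p$ choices of $f(u)\in P$ and over the $p-1+q$ variable coordinates, any neighbor of $v$ takes one of at most $p\cdot 2^{p-1}\cdot 3^q$ possible color codes. Since $f$ is locating, the $\Delta(T)$ neighbors of $v$ must display pairwise distinct codes, so $\Delta(T)\le p\cdot 2^{p-1}\cdot 3^q\le 4\cdot 3^{k-3}$ by Lemma~\ref{p23}. There is no serious obstacle here: the only subtlety is the triangle-inequality step, where one must record both directions $d(v,V_j)-1\le d(u,V_j)\le d(v,V_j)+1$. It is precisely the factor $3$ (rather than $2$) from the far colors that separates this bound from the incorrect $(k-1)2^{k-2}$ estimate of Chartrand et al., in which the far coordinates were implicitly allowed only two values.
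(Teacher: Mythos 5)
Your proposal is correct and follows essentially the same route as the paper's proof: fix a maximum-degree vertex, classify the colors into the one on $v$, the $p$ colors at distance $1$, and the $q=k-1-p$ colors at distance $\geq 2$, bound the possible color codes of a neighbor by $2$ choices per close coordinate and $3$ per far coordinate, and conclude $\Delta(T)\leq p2^{p-1}3^{q}\leq 4\times 3^{k-3}$ via Lemma~\ref{p23}. The only cosmetic difference is that the paper bounds $|V_i\cap N_T(x)|$ separately for each of the $p$ neighbor colors and sums, whereas you fold the factor $p$ in as the choice of $f(u)$; the counting is identical.
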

\begin{proof}{
Assume that $T$ is a tree with locating chromatic number $k$ and $f:V(T)\rightarrow [k]$ is a locating $k$-coloring  of $T$.
Let $x$ be a vertex of maximum degree in $T$. Without loss of generality and by renaming the symbols $1,2,...,k$, if it is necessary, assume that $f(x)=1$ and $f(N_T(x))=\{2,3,...,p+1\}$ for some integer $p$, $1\leq p<k$.
For each $i$, $1\leq i\leq k$, $V_i=\{v:~v\in V(T),~f(v)=i\}$ is the $i$-th color class of $T$ and $\Pi=(V_1,V_2,...,V_k)$ is an ordered partition of $V(T)$ into the resulting color classes.
\\
The color code of $x$ is $c_{_{\Pi}}(x)=(d_1,d_2,...,d_k)$, where for each $j\in\{1,2,...,k\}$, $d_j=d(x,V_j)$.
Note that $d_1=0$, $d_2=d_3=\cdots=d_{p+1}=1$, and $d_j\geq 2$ for each $j>p+1$.
Let $i\in\{2,3,...,p+1\}$ be an arbitrary integer and $y\in V_i\cap N_T(x)$. The color code of $y$ is $c_{_{\Pi}}(y)=(c_1,c_2,...,c_k)$ where $c_1=1$ and $c_i=0$.
Also, for each $r\in\{2,3,...,p+1\}\setminus\{i\}$, $c_r\in\{1,2\}$, and  for each $j\in\{p+2,p+3,...,k\}$, $c_j\in\{d_j-1,d_j,d_j+1\}$.
Note that $$|\{2,3,...,p+1\}\setminus\{i\}|=p-1,~~|\{p+2,p+3,...,k\}|=k-1-p.$$
Since $f$ is a locating coloring, distinct vertices with (the same) color $i$ have distinct color codes. Hence,
\begin{eqnarray} \label{intersectsize}
\left|V_i\cap N_T(x)\right|=\left|\{c_{_{\Pi}}(y):~y\in V_i\cap N_T(x)\}\right|\leq 2^{p-1}3^{k-1-p}.
\end{eqnarray}
Inequality \ref{intersectsize} holds for each $i\in\{2,3,...,p+1\}$. Therefore,
\begin{eqnarray*}
\Delta(T)&=& |N_T(x)|\\
&=& |V_2\cap N_T(x)|+|V_3\cap N_T(x)|+...+|V_{p+1}\cap N_T(x)|\\
&\leq& 2^{p-1}3^{k-1-p}+2^{p-1}3^{k-1-p}+...+2^{p-1}3^{k-1-p} \\
&=& p2^{p-1}3^{k-1-p}.
\end{eqnarray*}
Now, by assuming that $q=k-1-p$, Lemma \ref{p23} implies $\Delta(T)\leq 4\times 3^{k-3}$.
}\end{proof}

Using Theorem \ref{correctbound}, if $\Delta(T)>4\times 3^{k-3}$, then $\Cchi_{_L}(T)>k$. This theorem shows that trees with large maximum degrees have large locating chromatic numbers.
Note that $$(3-1)2^{3-2}=4=4\times3^{3-3},~~(4-1)2^{4-2}=12=4\times3^{4-1}.$$
Thus, for $k\in\{3,4\}$ two bounds in Theorems \ref{incorrectbound} and \ref{correctbound} coincide.
But for each $k\geq5$ we have $(k-1)2^{k-2}<4\times3^{k-3}$ which means the bound in Theorem \ref{correctbound} is better. In fact, Theorem \ref{construction} shows that the bound provided by Theorem \ref{correctbound} is tight and is best possible. In Theorem \ref{construction} the structure of tree $T_5$ indicated in Example \ref{counterexample} is generalized.
\\Consider the tree $T_5$ and its coloring $f_5$ mentioned in Example \ref{counterexample}. Using the Cartesian product of sets, let
$$A=\{1\}\times\{0\}\times\{1,2\}\times\{1,2,3\}\times\{1,2,3\}=\{\alpha_{_1},\alpha_{_2},...,\alpha_{_{18}}\},$$
and
$$\bar{A}=\{1\}\times\{1,2\}\times\{0\}\times\{1,2,3\}\times\{1,2,3\}=\{\bar{\alpha}_{_1},\bar{\alpha}_{_2},...,\bar{\alpha}_{_{18}}\},$$
where the elements of $A$ and $\bar{A}$ are ordered using the lexicographic ordering. Therefore,
{\footnotesize
\begin{eqnarray*}
\alpha_{_1}=(1,0,1,1,1),~ \alpha_{_2}=(1,0,1,1,2),~ \alpha_{_3}=(1,0,1,1,3),~
\alpha_{_4}=(1,0,1,2,1),~ \alpha_{_5}=(1,0,1,2,2),~\alpha_{_6}=(1,0,1,2,3)~~~~~~~~ \\
\alpha_{_7}=(1,0,1,3,1),~ \alpha_{_8}=(1,0,1,3,2),~ \alpha_{_9}=(1,0,1,3,3),~ \alpha_{_{10}}=(1,0,2,1,1),~\alpha_{_{11}}=(1,0,2,1,2),~\alpha_{_{12}}=(1,0,2,1,3)~~~~~ \\
\alpha_{_{13}}=(1,0,2,2,1),~ \alpha_{_{14}}=(1,0,2,2,2),~ \alpha_{_{15}}=(1,0,2,2,3),~
\alpha_{_{16}}=(1,0,2,3,1),~ \alpha_{_{17}}=(1,0,2,3,2),~ \alpha_{_{18}}=(1,0,2,3,3)~~ \\
\bar{\alpha}_{_1}=(1,1,0,1,1),~\bar{\alpha}_{_2}=(1,1,0,1,2),~\bar{\alpha}_{_3}=(1,1,0,1,3),~
\bar{\alpha}_{_4}=(1,1,0,2,1),~\bar{\alpha}_{_5}=(1,1,0,2,2),~\bar{\alpha}_{_6}=(1,1,0,2,3)~~~~~~~~~\! \\
\bar{\alpha}_{_7}=(1,1,0,3,1),~\bar{\alpha}_{_8}=(1,1,0,3,2),~\bar{\alpha}_{_9}=(1,1,0,3,3),~
\bar{\alpha}_{_{10}}=(1,2,0,1,1),~\bar{\alpha}_{_{11}}=(1,2,0,1,2),~\bar{\alpha}_{_{12}}=(1,2,0,1,3)~~~~~ \\
\bar{\alpha}_{_{13}}=(1,2,0,2,1),~\bar{\alpha}_{_{14}}=(1,2,0,2,2),~\bar{\alpha}_{_{15}}=(1,2,0,2,3),~
\bar{\alpha}_{_{16}}=(1,2,0,3,1),~\bar{\alpha}_{_{17}}=(1,2,0,3,2),~\bar{\alpha}_{_{18}}=(1,2,0,3,3).~
\end{eqnarray*}
}

Using Table \ref{t1}, it can be easily checked that
$c_{_\Pi}(y_{_i})=\alpha_{_i}$ and $c_{_\Pi}(\bar{y}_{_i})=\bar{\alpha}_{_i}$ for each $i\in\{1,2,...,18\}$.
Note that $x$ is a vertex of maximum degree $4\times3^{5-3}=36$ in $T_5$ and $N_{T_5}(x)$ is divided into two disjoint sets $\{y_{_i}:~1\leq i\leq 18\}$ and $\{\bar{y}_{_i}:~1\leq i\leq 18\}$.
In fact,  the tree $T_5$ and its coloring $f_5$ are constructed using $5$-tuples in $A\cup \bar{A}$ in such a way that the color code of each $y_{_i}$ becomes $\alpha_{_i}$ and the color code of each $\bar{y}_{_i}$ becomes $\bar{\alpha}_{_i}$.
For instance, first component of each $\alpha_{_i}$ is $1$, and $x$ is a vertex with color $1$ which is adjacent to each $y_{_i}$. Second component of each $\alpha_{_i}$ is $0$ and the color of each  $y_{_i}$ is $2$.
Since $\alpha_{_1}=(1,0,1,1,1)$, vertex $y_{_1}$ is adjacent to vertices $z_{_1}^3$, $z_{_1}^4$, and $z_{_1}^5$ with colors $3$, $4$, and $5$, respectively.

Consider the vertex $y_{_{15}}$ and its corresponding $5$-tuple $\alpha_{_{15}}=(1,0,2,2,3)$. Third component of $\alpha_{_{15}}$ is $2$ and the distance from $y_{_{15}}$ to vertex $\bar{y}_{_1}$ with color three is $2$. Fourth component of $\alpha_{_{15}}$ is $2$ and the distance from $y_{_{15}}$ to vertex $z_{_{15}}^4$ with color four is $2$. Fifth component of $\alpha_{_{15}}$ is $3$ and the distance from $y_{_{15}}$ to vertex $z_{_1}^5$ with color five is $3$.
Now consider the vertex $z_{_{15}}^4$. The color of $z_{_{15}}^4$ is $4$ and this vertex is adjacent to $x_{_{15}}^4$ with color $1$. Hence, $d(z_{_{15}}^4,V_4)=0$ and $d(z_{_{15}}^4,V_1)=1$. For each vertex $v\in V(T_5)\setminus \{x_{_{15}}^4,z_{_{15}}^4\}$, the distance between $z_{_{15}}^4$ and $v$ in $T_5$ is given by $$d(z_{_{15}}^4,v)=d(z_{_{15}}^4,y_{_{15}})+d(y_{_{15}},v)=2+d(y_{_{15}},v).$$
Thus, $d(z_{_{15}}^4,V_i)=d(y_{_{15}},V_i)+2$ for each $i\in\{1,2,...,5\}\setminus\{1,4\}$.
Therefore, using the componentwise additions, subtractions and scalar multiplications, the color code of $z_{_{15}}^4$ is
\begin{eqnarray*}
c_{_\Pi}(z_{_{15}}^4)&=&\left(1,d(y_{_{15}},V_2)+2,d(y_{_{15}},V_3)+2,0,d(y_{_{15}},V_5)+2\right)\\
&=& \left(d(\bar{y}_{_{15}},V_1),d(y_{_{15}},V_2)+2,d(y_{_{15}},V_3)+2,d(y_{_{15}},V_4)-2,d(y_{_{15}},V_5)+2\right)\\
&=&
\left(d(\bar{y}_{_{15}},V_1),d(y_{_{15}},V_2),d(y_{_{15}},V_3),d(y_{_{15}},V_4),d(y_{_{15}},V_5)\right)
+(0,2,2,-2,2)\\
&=& c_{_\Pi}(y_{_{15}})+(2,2,2,2,2)-(2,0,0,4,0)\\
&=& \alpha_{_{15}}+2(1,1,1,1,1)-2(1,0,0,0,0)-4(0,0,0,1,0).
\end{eqnarray*}
Similarly, the color of vertex $z_{_{4}}^3$ is $3$ and
\begin{eqnarray*}
c_{_\Pi}(z_{_{4}}^3)&=&\left(d(y_{_{4}},V_1)+1,d(y_{_{4}},V_2)+1,0,d(y_{_{4}},V_4)+1,d(y_{_{4}},V_5)+1\right)\\
&=&\left(d(y_{_{4}},V_1)+1,d(y_{_{4}},V_2)+1,d(y_{_{4}},V_3)-1,d(y_{_{4}},V_4)+1,d(y_{_{4}},V_5)+1\right)\\
&=&\left(d(y_{_{4}},V_1),d(y_{_{4}},V_2),d(y_{_{4}},V_3),d(y_{_{4}},V_4),d(y_{_{4}},V_5)\right)+(1,1,-1,1,1)\\
&=& \alpha_{_{4}}+(1,1,1,1,1)-2(0,0,1,0,0).
\end{eqnarray*}
Similar arguments hold for other vertices of $T_5$.

\begin{thm}  \label{construction}
For each integer $k\geq 3$, there exists a tree $T_k$ with maximum degree $4\times 3^{k-3}$ whose locating chromatic number is $k$.
\end{thm}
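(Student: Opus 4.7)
The plan is to mimic the $T_5$ construction of Example \ref{counterexample}, exploiting Lemma \ref{p23}, which identifies $p=2$ as a case realizing the extremal value $4\times 3^{k-3}$. I will build $T_k$ around a central vertex $x$ of color $1$ whose $4\times 3^{k-3}$ neighbors split evenly into a color-$2$ block and a color-$3$ block, and then append short branches to each neighbor so as to realize a prescribed color code.

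Concretely, set
$$A=\{1\}\times\{0\}\times\{1,2\}\times\{1,2,3\}^{k-3},\qquad \bar A=\{1\}\times\{1,2\}\times\{0\}\times\{1,2,3\}^{k-3},$$
so $|A|=|\bar A|=2\cdot 3^{k-3}$. Introduce the root $x$ with color $1$, and for each $\alpha\in A$ (resp.\ $\bar\alpha\in \bar A$) a neighbor $y_\alpha$ of color $2$ (resp.\ $y_{\bar\alpha}$ of color $3$); this already gives $\deg(x)=4\times 3^{k-3}$. For every $y_\alpha$ and each coordinate $j\in\{3,\ldots,k\}$, attach a branch of length $\alpha_j$ whose terminal vertex has color $j$, with intermediate colors chosen from $\{1,2\}$ exactly as in the $T_5$ example: for $\alpha_j=2$ insert a color-$1$ vertex before the color-$j$ vertex, and for $\alpha_j=3$ insert a color-$1$ vertex and then a color-$2$ vertex. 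When $\alpha_3=2$ no third-coordinate branch is required, because any $y_{\bar\alpha'}$ is already a color-$3$ vertex at distance $2$ from $y_\alpha$ via $x$. Treat each $y_{\bar\alpha}$ symmetrically, swapping the roles of colors $2$ and $3$.

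The verification has three parts: (i) properness of the coloring, immediate from the colors chosen along the branches; (ii) the color code of $y_\alpha$ equals $\alpha$ and the color code of $y_{\bar\alpha}$ equals $\bar\alpha$; and (iii) all vertices of $T_k$ receive distinct color codes. For (ii), one checks that the branch attached for coordinate $j$ realizes the target distance $\alpha_j$, and that any competing path from $y_\alpha$ through $x$ into another neighbor's branch has length at least $1+1+1=3\geq\alpha_j$, so it cannot shorten the distance. For (iii), each appended vertex $z$ has a color code of the form $c_{{}_\Pi}(y_\alpha)+\Delta_z$ (or $c_{{}_\Pi}(y_{\bar\alpha})+\Delta_z$), where $\Delta_z$ depends only on $z$'s depth and branch-type, exactly as in the sample computations for $z_{15}^4$ and $z_4^3$ displayed just before the theorem; since distinct $\alpha$'s yield distinct $y_\alpha$-codes, so do descendants of the same branch-type, and across branch-types one distinguishes by which coordinate is zero.

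The maximum degree of $T_k$ is achieved uniquely at $x$, so $\Delta(T_k)=4\times 3^{k-3}$, and the coloring just described shows $\Cchi_{{}_L}(T_k)\leq k$. For the reverse inequality, suppose $\Cchi_{{}_L}(T_k)=k'<k$. Then $k'\geq 3$ (otherwise $T_k$ would be $K_2$, contradicting $\deg(x)\geq 4$), and Theorem \ref{correctbound} would yield $\Delta(T_k)\leq 4\times 3^{k'-3}\leq 4\times 3^{k-4}<4\times 3^{k-3}$, a contradiction. Hence $\Cchi_{{}_L}(T_k)=k$. The main technical obstacle is the coordinate-wise bookkeeping in (iii), which is routine but tedious, and follows exactly the pattern displayed in the worked example for $T_5$.
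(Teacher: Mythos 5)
Your overall skeleton matches the paper's proof: the sets $A$ and $\bar A$ of $k$-tuples, a central vertex $x$ of degree $4\times 3^{k-3}$ whose neighbors $y_\alpha$, $y_{\bar\alpha}$ are forced to have color codes $\alpha$, $\bar\alpha$, short appended branches to realize the coordinates, and the lower bound via Theorem \ref{correctbound}. However, your rule for coordinates equal to $3$ is a genuine deviation from the paper, and it breaks the construction. The paper attaches \emph{no} branch to $y_i$ for a coordinate $t$ with $\alpha_i(t)=3$ (inspect the vertex set of $T_k$: the vertices $z_i^t$, $x_i^t$ exist only for $\alpha_i(t)\in\{1,2\}$); the distance $3$ to $V_t$ is realized for free through the center, e.g. along $y_i - x - y_1 - z_1^t$, since $\alpha_1=(1,0,1,1,\dots,1)$ guarantees a color-$t$ leaf at distance $1$ from $y_1$. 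You instead append a path $y_\alpha - a - b - c$ with colors $1,2,j$, and these interior vertices collide. Concretely, take $k=6$, $\alpha=(1,0,1,3,1,1)$ and $\alpha'=(1,0,1,1,3,1)$; let $a$ be the color-$1$ vertex starting the length-$3$ branch of $y_\alpha$ toward color $4$, and $a'$ the one starting the length-$3$ branch of $y_{\alpha'}$ toward color $5$. Each is at distance $0$ from $V_1$, at distance $1$ from $V_2$ (both of its neighbors have color $2$), at distance $2$ from the terminal vertex of its own branch, and at distance $2$ from the single-leaf branches of its own $y$-vertex for every remaining color; hence both have color code $(0,1,2,2,2,2)$, and your coloring is not locating.

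The repair is exactly the paper's omission: attach a single leaf of color $t$ when the coordinate is $1$, a length-two path through a color-$1$ vertex when the coordinate is $2$ and $t\ge 4$ (for $t=3$ and value $2$ nothing is needed, as you correctly observed), and attach nothing when the coordinate is $3$, letting the path through $x$ supply the distance. The collision above lives precisely in the step (iii) that you defer as ``routine but tedious''; with the paper's leaner vertex set the distinctness check does go through, because every appended vertex's code is an explicit affine shift $\alpha_i + e - 2e_t$, $\alpha_i + e - 2e_1 - 2e_t$, or $\alpha_i + 2e - 2e_1 - 4e_t$ of a distinct base code. Your lower-bound argument via Theorem \ref{correctbound} is fine and is the same as the paper's.
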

\begin{proof}{
For $k\in \{3,4\}$ two trees $T_3$ and $T_4$ with their optimum locating colorings are shown in Figure \ref{fig:T3T4}.
\begin{figure}\centering
\includegraphics[scale=1]{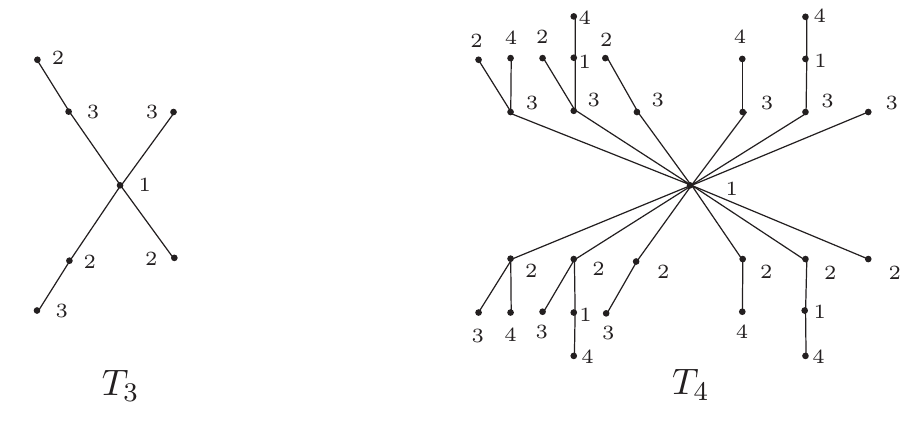}
\caption{\label{fig:T3T4} Two trees $T_3$ and $T_4$ with their optimum locating colorings.}
\end{figure}
Hereafter suppose that $k\geq 5$. The tree $T_5$ is investigated in Example \ref{counterexample}. We want to construct $T_k$ using the ideas involved in the structure of $T_5$.
Let
$${\cal A}=\{1\} \times \{0\} \times \{1,2\} \times \{1,2,3\}^{k-3},
~~~\bar{{\cal A}}=\{1\} \times \{1,2\} \times \{0\} \times \{1,2,3\}^{k-3}.$$
Note that each element of ${\cal A}$ and $\bar{{\cal A}}$ is a $k$-tuple and $|{\cal A}|=|\bar{{\cal A}}|=2\times 3^{k-3}$. Also, note that  ${\cal A}$ and $\bar{{\cal A}}$ are two disjoint sets. If $\gamma \in {\cal A}\cup\bar{{\cal A}}$ and $1\leq t\leq k$, then by $\gamma(t)$ we mean the $t$-th coordinate of $\gamma$. Assume that ${\cal A}=\{\alpha_1,\alpha_2,...,\alpha_{_{|{\cal A}|}}\}$ and $\bar{{\cal A}}=\{\bar{\alpha}_1,\bar{\alpha}_2,...,\bar{\alpha}_{_{|\bar{{\cal A}}|}}\}$ where the elements in ${\cal A}$ and $\bar{{\cal A}}$ are ordered using the lexicographic ordering.
Hence, $$\alpha_1=(1,0,1,1,1,...,1),~\alpha_{|{\cal A}|}=(1,0,2,3,3,...,3),~~~\bar{\alpha}_1=(1,1,0,1,1,...,1),~\bar{\alpha}_{|\bar{{\cal A}}|}=(1,2,0,3,3,...,3).$$
Let $T_k$ be the tree with vertex set $V(T_k)$ and edge set $E(T_k)$ defined as follow.
\begin{eqnarray*}
V(T_k)&=&\{x\} \cup \{y_i:~ 1\leq i\leq |{\cal A}|\} \cup \{\bar{y}_i:~1\leq i\leq |\bar{{\cal A}}|\} \\
&~&\cup~ \{z_i^3:~1\leq i\leq |{\cal A}|,~\alpha_i(3)=1 \} \cup \{\bar{z}_i^2:~1\leq i\leq |\bar{{\cal A}}|,~\bar{\alpha}_i(2)=1 \}\\
&~&\cup~ \{z_i^t:~1\leq i\leq |{\cal A}|,~4\leq t\leq k,~1\leq\alpha_i(t)\leq2 \} \cup \{\bar{z}_i^t:~1\leq i\leq |\bar{{\cal A}}|,~4\leq t\leq k,~1\leq \bar{\alpha}_i(t)\leq 2 \}\\
&~&\cup~ \{x_i^t:~1\leq i\leq |{\cal A}|,~4\leq t\leq k,~\alpha_i(t)=2 \} \cup \{\bar{x}_i^t:~1\leq i\leq |\bar{{\cal A}}|,~4\leq t\leq k,~\bar{\alpha}_i(t)=2 \},
\end{eqnarray*}
and
\begin{eqnarray*}
E(T_k)&=&\{xy_i:~1\leq i\leq |{\cal A}|\} \cup \{x\bar{y}_i:~1\leq i\leq |\bar{{\cal A}}|\} \\
&~& \cup~ \{y_i z_i^t:~1\leq i\leq |{\cal A}|,~2\leq t\leq k,~\alpha_i(t)=1\} \cup
\{\bar{y}_i \bar{z}_i^t:~1\leq i\leq |\bar{{\cal A}}|,~2\leq t\leq k,~\bar{\alpha}_i(t)=1\} \\
&~& \cup~ \{y_i x_i^t:~1\leq i\leq |{\cal A}|,~4\leq t\leq k,~\alpha_i(t)=2\} \cup \{\bar{y}_i \bar{x}_i^t:~1\leq i\leq |\bar{{\cal A}}|,~4\leq t\leq k,~\bar{\alpha}_i(t)=2\} \\
&~& \cup~ \{x_i^t z_i^t:~1\leq i\leq |{\cal A}|,~4\leq t\leq k,~\alpha_i(t)=2 \} \cup \{\bar{x}_i^t \bar{z}_i^t:~1\leq i\leq |\bar{{\cal A}}|,~4\leq t\leq k,~\bar{\alpha}_i(t)=2 \}.
\end{eqnarray*}
For the case $k=5$, the tree $T_5$ is illustrated in Figure \ref{fig:namedtree}.
Note that $$\Delta(T_k)=\deg(x)=|{\cal A}|+|\bar{{\cal A}}|=2\times3^{k-3}+2\times3^{k-3}=4\times3^{k-3}>4\times3^{(k-1)-3}.$$
Thus, Theorem \ref{correctbound} implies that the locating chromatic number of $T_k$ is at least $k$. We show that $\Cchi_{_L}(T_k)$ is equal to $k$. For this reason, it is sufficient to provide a locating $k$-coloring of the vertices of $T_k$. \\
Define the vertex $k$-coloring $f_k:V(T_k)\rightarrow [k]$ as follow.
\begin{eqnarray*}
f_k(v)=\left\{\begin{array}{ll}
1 & v=x  \\
2 & v=y_i,~1\leq i\leq |{\cal A}| \\
3 & v=\bar{y}_i,~1\leq i\leq |\bar{{\cal A}}|  \\
3 & v=z_i^3,~1\leq i\leq |{\cal A}|,~\alpha_i(3)=1  \\
2 & v=\bar{z}_i^2,~1\leq i\leq |\bar{{\cal A}}|,~\bar{\alpha}_i(2)=1  \\
1 & v=x_i^t,~1\leq i\leq |{\cal A}|,~4\leq t\leq k,~\alpha_i(t)=2 \\
1 & v=\bar{x}_i^t,~1\leq i\leq |\bar{{\cal A}}|,~4\leq t\leq k,~\bar{\alpha}_i(t)=2\\
t & v=z_i^t,~1\leq i\leq |{\cal A}|,~4\leq t\leq k,~1\leq\alpha_i(t)\leq2 \\
t & v=\bar{z}_i^t,~1\leq i\leq |\bar{{\cal A}}|,~4\leq t\leq k,~1\leq \bar{\alpha}_i(t)\leq2.
\end{array}\right.
\end{eqnarray*}
For the case $k=5$, the coloring $f_5$ of $T_5$ is illustrated in Figure \ref{fig:coloredtree}.
Let $\Pi=(V_1,V_2,...,V_k)$ be an ordered partition of $V(T_k)$ into the resulting color classes $V_i=\{v:~v\in V(T_k),~f_k(v)=i\}$, $1\leq i\leq k$.
\\ For each $i$, $1\leq i\leq k$, let $e_i=(\delta_{1,i},\delta_{2,i},...,\delta_{k,i})$ where $\delta_{j,i}$ is the Kroneker delta. Also, using the componentwise summation, let $e=e_1+e_2+\cdots+e_k$. Therefore, $$e_1=(1,0,0,...,0,0),~e_2=(0,1,0,...,0,0),~...,~e_k=(0,0,0,..,0,1),~e=(1,1,1,...,1,1).$$
Using the structure of $T_k$ and by a simple argument similar to what is discussed after Theorem \ref{correctbound}, we can easily derive the following color codes.
\begin{eqnarray*}
c_{_{\Pi}}(v)=\left\{\begin{array}{ll}
(0,1,1,2,2,...,2) & v=x  \\
\alpha_{_i} & v=y_i,~1\leq i\leq |{\cal A}| \\
\alpha_{_i}+e-2e_t & v=z_i^t,~1\leq i\leq |{\cal A}|,~2\leq t\leq k,~\alpha_i(t)=1 \\
\alpha_{_i}+e-2e_1-2e_t & v=x_i^t,~1\leq i\leq |{\cal A}|,~4\leq t\leq k,~\alpha_i(t)=2 \\
\alpha_{_i}+2e-2e_1-4e_t & v=z_i^t,~1\leq i\leq |{\cal A}|,~4\leq t\leq k,~\alpha_i(t)=2 \\
\bar{\alpha}_{_i} & v=\bar{y}_i,~1\leq i\leq |\bar{{\cal A}}| \\
\bar{\alpha}_{_i}+e-2e_t & v=\bar{z}_i^t,~1\leq i\leq |\bar{{\cal A}}|,~2\leq t\leq k,~\bar{\alpha}_i(t)=1 \\
\bar{\alpha}_{_i}+e-2e_1-2e_t & v=\bar{x}_i^t,~1\leq i\leq |\bar{{\cal A}}|,~4\leq t\leq k,~\bar{\alpha}_i(t)=2 \\
\bar{\alpha}_{_i}+2e-2e_1-4e_t & v=\bar{z}_i^t,~1\leq i\leq |\bar{{\cal A}}|,~4\leq t\leq k,~\bar{\alpha}_i(t)=2. \\
\end{array}\right.
\end{eqnarray*}
Therefore, distinct vertices of $T_k$ have distinct color codes. Hence, $f_k$ is a locating $k$-coloring of $T_k$ and this completes the proof.
}\end{proof}
{\bf Acknowledgements.}
We would like to express our deepest gratitude to Dr. Behnaz Omoomi for her invaluable comments and suggestions.
This research was in part supported by a grant from IPM (No. 91050012).

\end{document}